\newcommand{\N}{ \mathbb{N} }
\newcommand{\Z}{ \mathbb{Z} }
\newcommand{\R}{ \mathbb{R} }
\newcommand{\wh}[1]{ \widehat{ #1 } }
\newcommand{\wt}[1]{ \widetilde{ #1 } }
\newcommand{\calB}{\mathcal{B}}
\newcommand{\calD}{\mathcal{D}}
\newcommand{\calG}{\mathcal{G}}
\newcommand{\calS}{\mathcal{S}}
\newcommand{\eins}{\mathbf{1}}
\newcommand{\matX}{{\mathbf X}}
\newcommand{\matS}{{\mathbf S}}
\newcommand{\matV}{{\mathbf V}}
\newcommand{\vecb}{{ \mathbf b}}
\newcommand{\vece}{{ \mathbf e}}
\newcommand{\vecv}{{ \mathbf v}}
\newcommand{\vecw}{{ \mathbf w}}
\newcommand{\vecY}{{ \mathbf Y}}
\newcommand{\vecZ}{{ \mathbf Z}}
\newcommand{\bfbeta}{\beta}
\newcommand{\bfSigma}{\boldsymbol{\Sigma}}
\newcommand{\Var}{{\mbox{Var\,}}}
\newcommand{\Cov}{{\mbox{Cov\,}}}
\newtheorem{theorem}{Theorem}
\newtheorem{lemma}{Lemma}
\newtheorem{remark}{Remark}
\title{On Extreme Value Asymptotics of Projected Sample Covariances in High Dimensions with Applications in Finance and Convolutional Networks}
\author{\Large Ansgar Steland \\ \ \\ Institute of Statistics and AI Center \\ RWTH Aachen University \\ Pontdriesch 14-16, 52062 Aachen \\ 
}
\date{September 2023}
\begin{document}

\maketitle

\begin{abstract} Maximum-type statistics of certain functions of the sample covariance matrix of high-dimensional vector time series are studied to statistically confirm or reject the null hypothesis that a data set has been collected under normal conditions. The approach generalizes the case of the maximal deviation of the sample autocovariances function from its assumed values. Within a linear time series framework it is shown that Gumbel-type extreme value asymptotics holds true. As applications we discuss long-only mimimal-variance portfolio optimization and subportfolio analysis with respect to idiosyncratic risks, ETF index tracking by sparse tracking portfolios, convolutional deep learners for image analysis and the analysis of array-of-sensors data. 
\end{abstract}

\section{Introduction}

An ubiquitous problem in data analysis is the problem to analyze a set of observations, in order to decide whether it satisfies a stability assumption formalizing the concept that the data set represents measurements taken under normal conditions. Let us briefly discuss two important areas where massive high-dimensional data needs to be processed and analyzed to clarify this important question: Modern industrial quality control and financial markets.

{\em Industrial Quality Control:} In modern industrial quality control normal conditions typically means that the production process is in a state of (statistical) control. Since industrial quality measurements in mass production are  taken in large numbers at high frequencies along a high-throughput production line, the resulting data are high-dimensional. Concretely, the data are often given in the form of time series data of sets of relevant numbers (such as quality features, indicators etc.), discretized measurement curves or vectorized imaging data. 
Whereas the application of the proposed method is relatively straighforward in the case of time series data, its applicability to discretized functional data and image data is somewhat less obvious but of considerable interests. Indeed, imaging technology is a well established tool in industrial quality  control to monitor the quality of produced items. Such images are frequently analyzed on-line and in real time using pretrained convolutional neural networks, in order to detect and identify either certain objects, e.g. to initiate certain actions with them, or specific defects to control production quality. We will discuss later how one can analyze the feature-generating part(s) of the network by the proposed method, in order to detect departures of the input's distribution from the assumed law under normal conditions. This means, the test is in terms of those features used and further processed by the neural net. 

{\em Financial Markets:} Similarly, financial markets produce massive amounts of data on indices, assets and derivatives. Just to give an impression: According to The World Bank there are around $ 43,000 $ exchange-listed companies worldwide, and the Financial Times has cited a study from the Index Industry Association which estimates that ca. $3$ million indices are calculated and published worldwide. Current trends are ESG and Climate Indices. ESG aims at measuring sustainability and summarizes criteria for the three key factors environmental, social and governance. There is an ongoing debate about their concrete definition and the question how to measure them for an exchange-traded company, driven by the fact that ESG ratings for an investable company may differ substatiantially across evaluating banks and firms issuing investment fonds. Another topic of critical debate is green-washing. Nevertheless, ESG investing has become an important and integral part of the investment business. Climate indices aim at measuring and tracking low carbon, greenhouse gas emissions, climate action, net zero transission, fossil fuels exclusion and EU Paris-alignment, both for equities and fixed income instruments. 

A common but doubtful assumption in practice is to define stability via the assumption that the data consist of a sample of $d$-dimensional independent and identically distributed random vectors.  But for big and high-dimensional data available today, where the dimension $d$ may be large relative to the sample size $n$,  this assumption is usually violated and needs to be replaced by the assumption that under normal conditions the observations form a strictly or weakly stationary process. Then the problem arises how to statistically confirm or reject the null hypothesis that this form of stability holds true. Whereas tests for constancy of the process mean are well studied and widely used, checking the covariance structure is more subtle and requires tools of modern high-dimensional time series analysis and stochastic process theory. To compare the (sample) covariance matrix with its population version, one can consider a global measure such as a matrix norm of their difference. However, this approach suffers from the fact that there are $ O(d^2) $ covariances and usually most of them are of minor importance or not informative at all. In high dimensions, this leads to the problem of noise accumulation when summarizing $ O(d^2) $ estimators, although the signal is low-dimensional or even sparse. This quickly results in poor performance and a substantial loss of statistical power. 

We approach the problem by focusing on selected (functions of) covariances and their estimates. Generally, when confronted with the problem to summarize a set of univariate statistics, such as differences of several estimators from their normal expectations, one needs to decide how to summarize them. If many coordinates are affected by a change, summation is a simple transformation usually leading to powerful procedures. But if only a part of the coordinates are expected to show up a change, perhaps even only a few of them, then it is more appropriate to consider the maximum. Indeed, the maximum reacts even if only one coordinate exhibits non-normal behaviour. From a statistical and probabilistic perspective, the maximum behaves quite differently from a sum. Whereas sum statistics are Gaussian in large samples under relatively mild conditions, maxima follow non-Gaussian extreme value laws and thus require a different treatment.  

In this paper, we consider the problem to analyze the (sample) covariance matrix of a high-dimensional vector time series, in order to confirm normal behaviour or detect non-normal behaviour of the correlation structure.  To be concrete, let us discuss a first explicit example and introduce notation: Let $ \vecY_t $, $ t \ge 1 $, be a mean zero $d$-dimensional vector-valued data stream with assumed covariance matrix $ \bfSigma_n $ (under normal behaviour) and denote by $ \wh{\bfSigma}_n $ the sample covariance matrix of the first $n$ data vectors $ \vecY_1, \ldots, \vecY_n $. We will assume that $ \vecY_t $ belongs to the class of  multivariate linear processes, but postpone a detailed description and assumptions to the next section. To test whether the assumed dependence model $ \bfSigma_n $ applies, we may select  $m$ entries, $ (i_1,k_1), \ldots, (i_m, k_m) $ and compare the corresponding estimates $ \wh{\Sigma}_{n,i_jk_j} $ with their hypothesized values $ \bfSigma_{n,i_j k_j} $, $ 1 \le j \le m$. A statistical test can be based on the max-deviations statistics 
\[ 
M_n = \max_{1 \le j \le m} |\wh{\bfSigma}_{n,i_jk_j} -   \bfSigma_{n,i_j k_j}|,
\] 
so that we reject the null hypothesis if this maximum is too large. There are specific problems where the number, $m$, of (sample) covariances under investigation may be small. For example, when $ \vecY_t $ are asset (or index) returns, one may be interested in analyzing certain submarkets or examining covariances of assets of such a submarket with a benchmark. The first case corresponds to a submatrix determined by the indices $ i_1, \ldots, i_l \in \{ 1, \ldots, d \}$ corresponding to the submarket's assets, i.e. one deletes all rows and columns of the (sample) covariance matrix of those assets not belonging to the submarket. The second case corresponds to all entries $(i_1,k), \ldots, (i_l,k) $, where $k$ is the index of the benchmark asset. But when the goal is a global test which examines the full covariance matrix,
 $m$ should be large. However, because usually the majority of the entries of a covariance matrix is small or even negligible, one may focus on a subset of the covariance matrix and expect that a test based on $ M_n $ is powerful even if $m$ is relatively small. Examples are structured covariance matrices or bandable covariance matrices. Structured covariance matrices arise in factor models and repeated measures designs, and typically one can assume that entries belonging to the blocks on the diagonal are much larger than entries belonging to the off-diagonal blocks. Bandable covariance matrices may arise when analyzing data from (large) sets of sensors as in environmental monitoring. If we suppose that the sensors are ordered in time or space and the $j$th coordinate of $ \vecY_t $ corresponds to the measurement from sensor $j$, it makes sense to assume that the correlation decreases with the distance between the sensors. Formally, $ \bfSigma_n $ is called bandable,
if $ | \bfSigma_{n,ik} | =  O( f(|i-k|) )$ for some strictly decreasing function $f$. Typical choices for $f$ are $ f(x) = x^{-a} $ for some $ a > 0 $. As a result, only the leading diagonals matter and the remaining part of the covariance matrix is negligible. This allows to focus on those sample covariances sitting on the main diagonals, such that again small to moderately large values of $m$ may suffice. However, theoretical results allowing that $m$ may grow with the sample size are of interest. We will come back to this point in the discussion on previous related results in the next section.

A related problem studied  in the literature to some extent is to check whether or not a stationary data stream $ Z_t $, $ t \ge 1 $, attains a prescribed (in-control) autocovariance funcion (ACF) $ \gamma(h) $, $ h \in \Z $. A natural test statistic is the maximal deviation of the sample autocovariances from their assumed population counterparts. It is well known that under mild conditions, for fixed $h$, the estimator $ \wt{\gamma}_Z(h) = \frac{1}{n} \sum_{t=1}^{n-h} Z_t Z_{t+h} $ of the lag--$h$ autocovariance $ \gamma_Z(h) $ of a time series $ Z_t$ satisfies the central limit theorem, i.e.,
$
\frac{1}{\sqrt{n}} \sum_{t=1}^{n-h} (Z_t Z_{t+h} - \gamma_Z(h)) \stackrel{d}{\to} N(0, \sigma_h^2 ),
$
as $ n \to \infty $. This holds, for example, if $ Z_t $ follows a linear process with absolutely summable coefficients, see e.g. \cite[Remark~1]{Wu2009}. Here the asymptotic variance $ \sigma_h^2 $ can be calculated using Bartlett's formula. The associated max-deviation statistic is given by
\[
V_n = \max_{0 \le h \le m} | \wh{\gamma}_Z(h) - \gamma_Z(h) |
\]
and calculates the maximal departure of the sample autocovariance from the true values up to the lag $ m $. It was conjectured by \cite{Wu2009} that $ V_n $ follows asymptotically
the Gumbel extreme value distribution when $ m_n \to \infty $. \cite{Jirak2011} verified this conjecture for linear processes and logarithmic growth of the maximal lag,
\[
m = O( \log n / \log \log n ),
\]
as $ n \to \infty $. \cite{XiaoWu2014} showed the result for nonlinear time series under the assumption $ m = O( n^{\eta} ) $ where $ 0 < \eta < 1 $. This paper complements these results by studying extreme value asymptotics for the max-type statistic $ M_n $ and a generalization thereof based on sample covariances of a linear multivariate time series.

As formulated above, the test based on $M_n$ examines selected sample covariances to evaluate whether they are in agreement with an assumed model given by $ \bfSigma_n  $. The methodology considered in this paper goes beyond this setting and allows to consider the maximal deviation of certain functions of the sample covariance matrix. The details of this extension are provided in the next section. Clearly, in order to conduct the proposed statistical test, we need the (asymptotic) distribution of $ M_n $ under the null hypothesis that $ \bfSigma_n = \Var( \vecY_t ) $ holds true. Our main theoretical result provides Gumbel extreme value asymptotics under mild conditions. 

The organisation of the paper is as follows. Section~\ref{Sec:Method} presents the proposed procedure and imposed assumptions. The asymptotics, which deals with Gumbel extreme value theory, is discussed in Section~\ref{Sec:Asymptotics}. 

\section{Maximal Deviations of Projected Sample Covariances}
\label{Sec:Method}

In the introduction we picked $m$ elements $ \wh{\bfSigma}_{n,i_jk_j} $ from the sample covariance matrix and considered the maximal departure from their theoretical values under normal conditions. Observe that these elements can be picked by calculating the bilinear form $ \vece_{i_j}' \wh{\bfSigma}_n \vece_{i_k} $ where $ \vece_\ell  = (0, \cdots, 0, 1, 0, \cdots, 0)' $ denotes the $\ell$th unit vector, $ 1 \le \ell \le m$. This observation suggests to study the generalized problem to consider the maximal deviation of $m$ bilinear forms $ \vecv_j{}' \wh{\bfSigma}_n  \vecw_j $ from the values $ \vecv_j{}'\bfSigma_n \vecw_j $, $ 1 \le j \le m $, assumed under normal conditions. Therefore, let us define
\begin{equation}
	\label{EVT}
	T_n = \max_{1 \le j \le m} | \calD_{nj} |,
	\qquad
	\calD_{nj} = \sqrt{n}  \vecv_j{}' ( \wh{\bfSigma}_n - \bfSigma_n ) \vecw_j,
	\ 1 \le j \le m.
\end{equation}
The vector time series $ \vecY_t $ is assumed to follow a multivariate linear process as in \cite{BoursSteland2021}, but we confine ourselves to the case that all coordinates $ Y_t^{(\nu)}, t \ge 1 $, $ 1 \le \nu \le d $, have coefficients with geometric decay. As well known, this is satisfied by many time series models used in practice including ARMA models. A more detailed description of this model is provided below.

The weighting vectors $ \vecv_j, \vecw_j $, $ 1 \le j \le m$, are assumed to have uniformly bounded $ \ell_1$-norms, i.e., there is some constant $ C $ such that
\begin{equation}
	\label{l1Condition}
	\sup_{j \ge 1} \| \vecv_j \|_{\ell_1} \le C, \qquad 	\sup_{j \ge 1} \| \vecw_j \|_{\ell_1} \le C.
\end{equation}
$ \ell_1$-weighing vectors naturally arise in various cases, e.g. in optimal portfolio selection and lasso regression, see the discussion in  \cite{StelandSachsBernoulli} and the financial applications discussed in detail in Section~\ref{Sec:Applications}. By a scaling trick, \cite{StelandSachsSPA}, one may also use uniformly $ \ell_2 $-bounded projections.  In Section~\ref{Sec:Applications} several applications are discussed in detail which naturally lead to projection vectors satisfying this assumption. We shall impose a further condition on the weighting vectors which is provided and discussed below.


Let us assume that $ \vecY_t $ is a multivariate linear process based on i.i.d. innovations, such that each coordinate series, $ Y_t^{(\nu)} $, $ t \ge 1 $, is a linear time series based on i.i.d. innovations with coefficients $ c_t^{(\nu)} $, $ t \ge 1 $, for $ 1 \le \nu \le d $. This means,
\[
  Y_t^{(\nu)} = \sum_{j=0}^\infty c_t^{(\nu)} \epsilon_{t-j}, \qquad t \ge 1,
\] 
for $ 1 \le \nu \le d $. We assume that $ E | \epsilon_1 |^{4+\delta} < \infty $ for some $ \delta > 0 $. The coefficients $ c_j^{(\nu)} $ are required to decay geometrically, i.e.
\begin{equation}
\label{DecayCond}
  \sup_{\nu \ge 1} | c_t^{(\nu)} | = O( \rho^{j} ), \qquad j \ge 0,
\end{equation}
for some $ 0 < \rho < 1 $. This model is well suited to describe high-dimensional series where quite often the coordinates are quite strongly correlated. But the model also allows that coordinates are independent or $r$-dependent. Recall that $r$-dependence of sequence of random variables $ Z_t $ means that $ Z_i $ and $ Z_j $ are independent if $ |i-j|> r $. For example, if for some fixed $r \in \N_0 $ it holds $ c_j^{(1)} =  0 $ for $ j \ge k$ and $ c_j^{(2)} = 0 $ if $ 0 \le j < k-r $, then $ Y_t^{(1)} $ and $ Y_t^{(2)} $ are $r$-dependent. Especially, if $ r = 0$ the series are independent, since the supports $ \calS_\nu = \{ c_t^{(\nu)} \not= 0 \} $, $ \nu = 1, 2 $, of their  coefficient sequences are disjoint. More generally, the model can host a family of $r$-dependent series. In \cite{StelandJMVA2020} it has been shown that it also includes a wide range of spiked covariance models as well as approximate VARMA models. 

We impose the following assumptions on the  weighting vectors $ \vecv_i $ and $ \vecw_i $, whose coordinates are denoted $ v_{\nu}^{(i)} $ and $ w_{\nu}^{(i)}$, respectively.

\textbf{Assumption (W1):} There exists $ 0 < \rho < 1 $ such that 
\[ \sum_{\nu=1}^d | v_{\nu}^{(i)} c_j^{(\nu)} | = O( \rho^{ji} ) \qquad \text{and} \qquad   \sum_{\nu=1}^d | w_{\nu}^{(i)} c_j^{(\nu)} | = O( \rho^{ji} ),
\]
for all $j \ge 1 $ and $i \ge 1 $.

Assumption (W1) is motivated as follows: Suppose that $ Z_t^{(\nu)} = \sum_{j=0}^\infty c_j^{(\nu)} \epsilon_{t-j} $ with coefficients $ c_j^{(\nu)} = ( \rho^{\nu}) ^j $. This means, the coordinates follow stationary autoregressive processes of order 1 with decreasing AR coefficients $ \rho^\nu $. Let $ \vecv_i = \vece_i $ and $ \vecw_k = \vece_k $. Then $ \vecv_i'\vecZ_t $ has coefficients $ \sum_{\nu=1}^d v_\nu^{(i)} c_j^{(\nu)} = \rho^{ij} $, $ j \ge 1 $, and $ \vecw_k'\vecZ_t $ has coefficients $ \sum_{\nu=1}^d w_\nu^{(k)} c_j^{(\nu)} = \rho^{kj} $, $j \ge 1 $. Therefore, Assumption (W1) means that that the coefficients of the projected series can be bounded by the decay behaviour arising when considering a family of AR model. 

Alternatively, one may require that the following assumption is fulfilled.

\textbf{Assumption (W2):} There exists $ 0 < \rho < 1 $ such that 
\[ \sum_{\nu=1}^d | v_{\nu}^{(i)} c_j^{(\nu)} | = O( \rho^{j+i} ) \qquad \text{and} \qquad   \sum_{\nu=1}^d | w_{\nu}^{(i)} c_j^{(\nu)} | = O( \rho^{j+i} ),
\]
for all $j \ge 1 $ and $i \ge 1 $.

That assumption is motivated by an array-of-sensors application discussed in detail in Section~\ref{Sec:Applications} and is also discussed in detail for subportfolios of the long-only minimial variance portfolio.

The following technical lemma shows that under Assumption (W1) resp. (W2) the covariances
\[
\beta_{ik}^{(n)} = \Cov( \vecv_i{}' \wh{\bfSigma}_n \vecw_i, \vecv_k{}' \wh{\bfSigma}_n \vecw_k  )
\]
decay geometrically and hence the correlations as well. 	In \cite{StelandSachsBernoulli} it has been shown that
\[
\beta_{ik}^{(n)} = F_{ni} F_{nk} ( \gamma - \sigma^4 ) + F_{n,ik}\sigma^4
\]
where $ \sigma^2 = \Var( \epsilon_1 ) $, $ \gamma = E( \epsilon_1^3 ) $ and
\begin{align*}
	F_{ni} & = 2 \sum_{\nu, \mu=1}^{d} v_\nu^{(i)} w_\mu^{(i)} \sum_{j=1}^{\infty}
	c_j^{(\nu)} c_j^{(\mu)}, \\
	F_{n,ik} & = \sum_{l=1}^\infty \sum_{\nu, \mu=1}^{d} \sum_{\nu', \mu' = 1}^{d}
	v_\nu^{(i)} w_\mu^{(i)} v_{\nu'}^{(k)} w_{\mu'}^{(k)} \sum_{j=1}^{\infty} c_{jl}(\nu, \mu, \nu', \mu' ),
\end{align*}
with 
\[ c_{jl}( \nu, \mu, \nu', \mu' ) = [c_j^{(\nu)} c_{j+l}^{(\mu)} + c_j^{(\mu)} c_{j+l}^{(\nu)}][c_j^{(\nu')} c_{j+l}^{(\mu')} + c_j^{(\mu')} c_{j+l}^{(\nu')}]. \] 
Here the entries of the weighting vectors are assumed to be fixed, i.e. they do not depend on $n$. Then the limits $ \beta_{ik} $ are obtained by formally replacing $d$ by $ \infty $ in the above formulas. The case that the entries depend on $n$ is examined below for a large class of weighting vectors. 

\begin{lemma}
	\label{lemma1} Suppose that Assumption (W1) or (W2) is fulfilled. Then $ \beta_{ik}^{(n)}  \to \beta_{ik} $, $ n \to \infty $, for all $ i, k \ge 1 $. Further,
	\[ \beta_{ik}^{(n)} = O( \rho^h ), \qquad \text{if $ |i-k| \ge h $},\]
	and the same applies to their limits.
\end{lemma}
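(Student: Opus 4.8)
The plan is to work directly from the explicit decomposition $\beta_{ik}^{(n)} = F_{ni} F_{nk}(\gamma - \sigma^4) + F_{n,ik}\sigma^4$ recalled above, in which $\gamma = E(\epsilon_1^3)$ and $\sigma^2 = \Var(\epsilon_1)$ are fixed constants not depending on $n$. Thus it suffices to analyze the three building blocks $F_{ni}$, $F_{nk}$ and $F_{n,ik}$, for each of which I would establish two facts: absolute convergence of the defining series as the dimension $d = d_n \to \infty$, and a geometric bound that is uniform in $n$. The stated convergence $\beta_{ik}^{(n)} \to \beta_{ik}$ and the decay estimate $\beta_{ik}^{(n)} = O(\rho^h)$ for $|i-k| \ge h$ then both follow, and since the uniform bounds survive the passage to the limit, the same decay holds for $\beta_{ik}$.

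The key manipulation is to interchange the order of summation so that the inner sums over $\nu$ and $\mu$ factor into exactly the projected-coefficient sums that Assumptions (W1)/(W2) control. For the first block,
\[ F_{ni} = 2\sum_{j=1}^\infty \Bigl(\sum_{\nu=1}^d v_\nu^{(i)} c_j^{(\nu)}\Bigr)\Bigl(\sum_{\mu=1}^d w_\mu^{(i)} c_j^{(\mu)}\Bigr), \]
and bounding each factor by $O(\rho^{ji})$ under (W1) (respectively $O(\rho^{j+i})$ under (W2)) reduces the estimate to summing a geometric series in $j$, yielding $|F_{ni}| = O(\rho^{2i})$ uniformly in $n$; the same holds for $F_{nk}$. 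For the quadruple sum $F_{n,ik}$ I would expand the product $c_{jl}(\nu,\mu,\nu',\mu')$ and group its factors into an ``$i$-block'' depending only on $v^{(i)}, w^{(i)}$ and a ``$k$-block'' depending only on $v^{(k)}, w^{(k)}$, writing $F_{n,ik} = \sum_{l,j} A_{jl}^{(i)} A_{jl}^{(k)}$, where each block again factors into projected-coefficient sums of the type appearing in (W1)/(W2). Bounding gives $|A_{jl}^{(i)}| = O(\rho^{(2j+l)i})$ and $|A_{jl}^{(k)}| = O(\rho^{(2j+l)k})$ under (W1) (with the obvious analogue under (W2)), and summing the resulting geometric series in $j$ and $l$ produces $|F_{n,ik}| = O(\rho^{i+k})$ (in fact smaller), again uniformly in $n$.

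Combining the three estimates, $|\beta_{ik}^{(n)}| \le |F_{ni}F_{nk}|\,|\gamma-\sigma^4| + |F_{n,ik}|\,\sigma^4 = O(\rho^{2(i+k)}) + O(\rho^{i+k})$, and since $i+k \ge |i-k| \ge h$ this is $O(\rho^h)$ with a constant independent of $n$. The convergence claim follows by dominated convergence: as $d_n \to \infty$ each inner partial sum converges to its infinite counterpart (which is absolutely convergent by (W1)/(W2)), and the uniform geometric majorant legitimizes passing the limit through the outer sums over $j$ (and $l$); the limiting constants are obtained by replacing $d$ with $\infty$, giving $\beta_{ik}^{(n)} \to \beta_{ik}$.

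The main obstacle is the bookkeeping in $F_{n,ik}$: one must expand $c_{jl}$ and regroup the four inner sums correctly so that each group is exactly of the form controlled by (W1)/(W2), and one must verify that the geometric majorant is uniform in $d$ so as to justify the interchange of limits. Once this factorization is in place, everything reduces to summing geometric series. The decay of the correlations asserted in the surrounding text then follows provided the variances $\beta_{ii}^{(n)}$ stay bounded away from zero.
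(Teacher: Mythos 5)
Your proposal is correct and follows essentially the same route as the paper: start from the decomposition $\beta_{ik}^{(n)} = F_{ni}F_{nk}(\gamma-\sigma^4) + F_{n,ik}\sigma^4$, interchange summation so that (W1)/(W2) control the projected coefficient sums, and sum geometric series to get $|F_{ni}F_{nk}|, |F_{n,ik}| = O(\rho^{i+k}) = O(\rho^h)$. Your only deviations are cosmetic or additive: you keep $c_{jl}$ in factored $i$-block/$k$-block form rather than multiplying it out into the paper's four terms $F_{n,ik}^{(1)},\dots,F_{n,ik}^{(4)}$ (the same estimate either way), and you make the convergence $\beta_{ik}^{(n)}\to\beta_{ik}$ explicit via a dominated-convergence argument that the paper leaves implicit.
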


\begin{remark} The assertion of Lemma~\ref{lemma1} can be used as a top-level assumption replacing the more concrete Assumptions (W1) and (W2), respectively.
\end{remark}

As an idealized setting for the case that the weighting vectors are estimators for true weighting vectors $ \vecv_i, \vecw_i $ satisfying our assumptions, let us assume that they depend on the sample sizes and ensure that the relative error is of the order $ O( r_n ) $ for some sequence $ r_n \to 0 $. This means, we assume that one uses projection vectors $ \vecv_{ni} = ( v_{n\nu}^{(i)} )_{\nu=1}^{d_n} $ and $ \vecw_{ni} = ( w_{n\nu}^{(i)} )_{\nu=1}^{d_n} $ satisfying,
\begin{equation}
\label{AssWeights}
	 \sup_{\nu, i \ge 1} \frac{| v_{n\nu}^{(i)} - v_\nu^{(i)} |}{ |v_\nu^{(i)}| } = O(r_n),
	 \quad 
	 \sup_{\nu, i \ge 1} \frac{| w_{n\nu}^{(i)} - w_\nu^{(i)} |}{ |w_\nu^{(i)} | } = O(r_n).
\end{equation}
Let us denote by $ \hat{\beta}_{ik}^{(n)} $ the associated covariances obtained when using the projections $ \vecv_{ni} $ and $ \vecw_{nk} $.

\begin{lemma}
	\label{lemma2} Suppose that the weighting vectors  $ \vecv_{ni} $, $ \vecw_{nk} $ satisfy (\ref{AssWeights}) holds. Then 
	\[
	  \hat\beta_{ik}^{(n)} = \beta_{ik} + O(r_n), \qquad \hat\beta_{ik} = O( \rho^h ), \quad |i-k| \ge h. 
	\]
\end{lemma}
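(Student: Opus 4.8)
The plan is to transfer the conclusions of Lemma~\ref{lemma1} from the true weighting vectors $\vecv_i,\vecw_i$ to the estimated ones $\vecv_{ni},\vecw_{nk}$ by a perturbation argument, exploiting that the decomposition $\beta_{ik}^{(n)} = F_{ni}F_{nk}(\gamma-\sigma^4) + F_{n,ik}\sigma^4$ is multilinear in the coordinates of the weighting vectors. Writing $\hat F_{ni}, \hat F_{n,ik}$ for the quantities obtained from the estimated weights, one has $\hat\beta_{ik}^{(n)} = \hat F_{ni}\hat F_{nk}(\gamma-\sigma^4) + \hat F_{n,ik}\sigma^4$, so it suffices to control the two differences $\hat F_{ni} - F_{ni}$ and $\hat F_{n,ik} - F_{n,ik}$ and then invoke Lemma~\ref{lemma1} for $\beta_{ik}^{(n)}$ itself.

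The central observation is that (\ref{AssWeights}) is a \emph{relative} error bound, which lets me write $v_{n\nu}^{(i)} = v_\nu^{(i)}(1 + \theta_{n\nu}^{(i)})$ and $w_{n\mu}^{(i)} = w_\mu^{(i)}(1 + \tilde\theta_{n\mu}^{(i)})$ with $\sup_{\nu,i}|\theta_{n\nu}^{(i)}| = O(r_n)$ and the analogue for $\tilde\theta$. Consequently the estimated weights inherit the bounds of Assumption (W1) (resp.\ (W2)) up to a factor $1 + O(r_n)$, since $\sum_\nu |v_{n\nu}^{(i)} c_j^{(\nu)}| \le (1 + O(r_n)) \sum_\nu |v_\nu^{(i)} c_j^{(\nu)}| = O(\rho^{ji})$. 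This is precisely the property that prevents the $d_n$ coordinatewise errors from accumulating as the dimension grows, and it guarantees that $\hat F_{ni}, \hat F_{n,ik}$ remain uniformly bounded with the same geometric decay.

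Carrying this out, I would expand $v_{n\nu}^{(i)}w_{n\mu}^{(i)} - v_\nu^{(i)}w_\mu^{(i)} = v_\nu^{(i)}w_\mu^{(i)}\bigl(\theta_{n\nu}^{(i)} + \tilde\theta_{n\mu}^{(i)} + \theta_{n\nu}^{(i)}\tilde\theta_{n\mu}^{(i)}\bigr)$ and bound its modulus by $O(r_n)\,|v_\nu^{(i)}w_\mu^{(i)}|$. Summing against the kernel $\sum_j c_j^{(\nu)}c_j^{(\mu)}$ and factoring the double sum as $\sum_j \bigl(\sum_\nu |v_\nu^{(i)}c_j^{(\nu)}|\bigr)\bigl(\sum_\mu |w_\mu^{(i)}c_j^{(\mu)}|\bigr)$ yields, by (W1)/(W2), a convergent geometric series, whence $|\hat F_{ni} - F_{ni}| = O(r_n)$ uniformly in $i$ (indeed with geometric decay in $i$). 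The analogous four-fold expansion of $v_{n\nu}^{(i)}w_{n\mu}^{(i)}v_{n\nu'}^{(k)}w_{n\mu'}^{(k)}$ against the kernel $c_{jl}$ gives $|\hat F_{n,ik} - F_{n,ik}| = O(r_n)$ with decay in $|i-k|$. Writing $\hat F_{ni}\hat F_{nk} - F_{ni}F_{nk} = (\hat F_{ni}-F_{ni})\hat F_{nk} + F_{ni}(\hat F_{nk}-F_{nk})$ and using the uniform boundedness of all $F$-factors, I obtain $\hat\beta_{ik}^{(n)} = \beta_{ik}^{(n)} + O(r_n)$ uniformly in $i,k$. Since Lemma~\ref{lemma1} gives $\beta_{ik}^{(n)} = \beta_{ik} + o(1)$, and that discrepancy stems only from truncating the defining sums at dimension $d_n$ (hence decays geometrically in $d_n$ and is dominated by $O(r_n)$), the first claim $\hat\beta_{ik}^{(n)} = \beta_{ik} + O(r_n)$ follows.

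The second claim is then immediate: because $r_n \to 0$, letting $n\to\infty$ in the first claim gives $\hat\beta_{ik} = \lim_n \hat\beta_{ik}^{(n)} = \beta_{ik}$, and Lemma~\ref{lemma1} already states $\beta_{ik} = O(\rho^h)$ for $|i-k|\ge h$, so $\hat\beta_{ik} = O(\rho^h)$. I expect the main obstacle to be the uniform-in-$i$ bound $|\hat F_{ni}-F_{ni}| = O(r_n)$ in the regime of growing dimension $d_n$: the relative form of (\ref{AssWeights}) is exactly what makes the perturbed weights obey the same (W1)/(W2) decay and thereby keeps the perturbed error sums summable, so that the estimation error does not blow up with the number of coordinates.
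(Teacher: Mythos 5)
Your argument is correct and follows essentially the same route as the paper: a multilinearity/perturbation argument that uses the relative error bound (\ref{AssWeights}) to control $\hat F_{ni}-F_{ni}$ and $\hat F_{n,ik}-F_{n,ik}$ by $O(r_n)$ times the corresponding (W1)/(W2)-summable majorants, then assembles $\hat\beta_{ik}^{(n)}=\beta_{ik}^{(n)}+O(r_n)$ and invokes Lemma~\ref{lemma1}; your multiplicative writing $v_{n\nu}^{(i)}=v_\nu^{(i)}(1+\theta_{n\nu}^{(i)})$ is just a repackaging of the paper's additive telescoping of the products. The only soft spot, which the paper's own proof shares, is the passage from $\beta_{ik}^{(n)}$ to its limit $\beta_{ik}$ inside the $O(r_n)$ claim; your remark that this truncation error is dominated by $O(r_n)$ is not actually justified by the stated assumptions, but it does not affect the substance of the argument.
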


\section{Gumbel Extreme Value Asymptotics}
\label{Sec:Asymptotics}

There are various approaches to establish an extreme value limit. As well known, there are only three possible limiting distributions for the (normalised) maximum $ M_n^* = \max( X_1, \ldots, X_n) $ of $n$ i.i.d. random variables $ X_1, \ldots, X_n $: the Weibull law, the Gumbel distribution and the Fr\'echet law. This can be determined by studying the tail behaviour of the distribution of the $ X_i$'s. In particular, the normal distribution belongs to the attractor of the Gumbel law. In view of the asymptotic normality of the sample covariances and sample autocovariances of a stationary time series, this is the candidate extreme value distribution in this particular case. And since the Gaussian approximations of general bilinear forms of a sample covariance matrix for high-dimensional (linear) time series obtained in \cite{StelandSachsBernoulli}, \cite{StelandSachsSPA}, \cite{BoursSteland2021}, \cite{StelandJMVA2020} and \cite{MiesSteland2022} also imply asymptotic normality, the candidate extreme value asymptotics for $T_n $ is the Gumbel law as well. 

But here we have to deal with the maximum of dependent random variables, even if we take the Gaussian approximation step for granted, and when the $X_i$ are correlated, the situation is more involved even under Gaussianity. Indeed, then a Gumbel extreme value asymptotics may not apply due to clustering of extremes. But if the correlations die out quickly enough in the sense of Berman's condition, the Gumbel extreme value law still applies.
As discussed in \cite{Leadbetter1974}, Berman's condition also ensures the validity of the asymptotic mixing condition used for non-Gaussian series, which requires that the distribution function of finitely many coordinates has asymptotically product form up to an error $ g(k) = o(1)$, if there is a gap of length $k$ in the coordinates. 

In view of our Lemma~\ref{lemma1}, we can draw on a Berman-type condition to establish Gumbel asymptotics. However, some care is needed and there are further complications. Firstly, because even after approximating our sequence $ \calD_{nj} $ by a Gaussian counterpart, we are still left with a {\em non-stationary} Gaussian sequence and need to rely on a strenghtened Berman condition ensuring that  extreme value theory applies for a non-stationary Gaussian sequence. Second, we need to  approximate $ \calD_{nj} $, $ 1 \le j \le m$, by a Gaussian process. Here we rely on recent works which provide such approximations for high-dimensional time series (i.e. where the dimension may tend to $ \infty $). Taking into account the convergence rate imposes a  constraint on the growth of the number $m$ of bilinear forms we can take. 

The following main result now provides the extreme value asymptotics for $ T_n = \max_{1 \le j \le m} | \calD_{nj} | $ and shows that $ T_n $ is in the extreme value attractor of the Gumbel law. We also show a related result for the maximum of the differences, $ \max_{1 \le j \le m} \calD_{nj} $. In this case a different centering sequence is required.

\begin{theorem} 
	\label{EVT}
	Suppose that $ \vecY_t $ is a multivariate linear process with i.i.d. innovations, such that each coordinate process is a linear process with i.i.d. innovations posessing a finite absolute moment of order $ 4 + \delta $ for some $ \delta > 0 $ and geometrically decaying coefficients. If Assumption (W1) or (W2) holds, then there exists some $  0 < \lambda < 1/6 $ such that if
	\begin{equation}
		\label{EVTConditionLn}
		m \sqrt{2 \log m } = o( n^\lambda ),
	\end{equation}
	as $ n \to \infty $, then we have for $ z \in \R $
	\[
	P\biggl( a_m^{-1} \bigl( \max_{1 \le j \le m} | \calD_{nj} |/\beta_{jj}^{1/2} - b_m \bigr) \le z \biggr) \to \exp( e^{-z} ), 
	\]
	as $ n \to \infty $, where $ a_m =   (2 \log m )^{-1/2} $ and
	\[ b_m  = \{ (2 \log m )^{1/2} - (8 \log m)^{-1/2}( \log \log m + 4 \pi - 4) \}. \]
	Further,
	\[
	P\biggl( a_m^{-1} \bigl( \max_{1 \le j \le m}  \calD_{nj} /\beta_{jj}^{1/2} - c_m \bigr) \le z \biggr) \to \exp( e^{-z} ), 
\]
as $ n \to \infty $, where 
\[ 
	c_m =   (2 \log m )^{-1/2}- \frac{1}{2}(2 \log m)^{-1/2}( \log \log m + 4 \pi).
\]
\end{theorem}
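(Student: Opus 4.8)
The plan is to reduce the extreme value problem for the dependent, non-Gaussian sequence $\calD_{n1}, \ldots, \calD_{nm}$ to the classical extreme value theory of a \emph{standardized non-stationary Gaussian sequence}, in two stages: first a Gaussian coupling, then a Berman-type comparison argument. The two displayed limits (two-sided $\max_j |\calD_{nj}|$ and one-sided $\max_j \calD_{nj}$) differ only in the final, elementary computation of the centering sequence, so both follow once the Gaussian sequence is understood.

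First I would invoke the Gaussian approximation for bilinear forms of high-dimensional sample covariance matrices established in the cited works (\cite{BoursSteland2021}, \cite{MiesSteland2022}) to construct, on a suitable probability space, a centered Gaussian vector $(\calG_{n1}, \ldots, \calG_{nm})$ whose covariance matrix coincides with $(\beta_{ij}^{(n)})_{i,j\le m}$ and such that $\max_{j\le m}|\calD_{nj} - \calG_{nj}|$ is controlled in probability. The crucial point is the scale at which this error must be negligible: since $a_m^{-1} = (2\log m)^{1/2}$ diverges, I need $(2\log m)^{1/2}\cdot \max_{j\le m}|\calD_{nj}-\calG_{nj}| = o_P(1)$. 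Tracking how the coupling rate depends on both $n$ and $m$ — the number of forms enters through a maximal/union step contributing the $\sqrt{2\log m}$ factor — is precisely what produces the growth constraint $m\sqrt{2\log m} = o(n^\lambda)$ in \eqref{EVTConditionLn}, with the threshold $\lambda < 1/6$ reflecting the available coupling rate under the assumed $(4+\delta)$-moment and geometric-decay conditions. Granted this, $T_n$ and $\max_{j\le m}|\calG_{nj}|$ (after coordinatewise standardization) share the same Gumbel limit if either one has it.

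Next I would analyze the standardized Gaussian sequence $\xi_{nj} := \calG_{nj}/(\beta_{jj}^{(n)})^{1/2}$, which has unit variance and correlations $r_{ij}^{(n)} = \beta_{ij}^{(n)}/(\beta_{ii}^{(n)}\beta_{jj}^{(n)})^{1/2}$. By Lemma~\ref{lemma1}, $|r_{ij}^{(n)}| = O(\rho^{|i-j|})$, so $\sup_{|i-j|\ge k}|r_{ij}^{(n)}|\,\log k \to 0$; Berman's condition holds with room to spare, the correlations being summable. Because the variances $\beta_{jj}^{(n)}$ genuinely differ (this is why we normalize by $\beta_{jj}^{1/2}$ coordinatewise), the sequence is non-stationary, and I would appeal to the strengthened-Berman extreme value theory for non-stationary Gaussian sequences (\cite{Leadbetter1974}), whose comparison lemma bounds the gap between $P(\max_j \xi_{nj} \le u)$ and its independent-coordinate analogue by $\sum_{i<j}|r_{ij}^{(n)}|\exp(-u^2/(1+|r_{ij}^{(n)}|))$. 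The geometric decay renders this term negligible at the relevant levels $u = u_m(z) = a_m z + b_m$, and replacing $\beta_{jj}^{(n)}$ by its limit $\beta_{jj}$ costs only an asymptotically vanishing shift, again by Lemma~\ref{lemma1}. This reduces matters to the maximum of $m$ independent standard normals, applied to both tails for the two-sided statement and to a single tail for the one-sided one.

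Finally I would read off the normalizing constants from the Mills-ratio expansion of the level $u_m$ solving $m\,P(|\xi|>u_m)\to e^{-z}$: for the two-sided maximum the tail $P(|\xi|>u) = 2(1-\Phi(u))(1+o(1))$ doubles the effective count, yielding $a_m$ and $b_m$, whereas for the one-sided maximum only the upper tail contributes, which is exactly the $\log 2$-type shift separating $c_m$ from $b_m$. The main obstacle is the first stage: controlling the coupling error \emph{uniformly} over all $m$ coordinates at the divergent scale $(2\log m)^{1/2}$, since a crude pointwise bound loses a factor that would violate \eqref{EVTConditionLn}. Obtaining a sharp maximal bound on $\max_{j\le m}|\calD_{nj}-\calG_{nj}|$, rather than a per-coordinate one, is what pins down the precise rate and forces the $\lambda<1/6$ threshold.
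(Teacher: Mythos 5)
Your proposal follows essentially the same route as the paper: a Gaussian coupling of $\calD_{nj}^*$ at rate $o_P(n^{-\lambda})$ from the cited approximation results, with the growth condition \eqref{EVTConditionLn} absorbing the $a_m^{-1}=(2\log m)^{1/2}$ inflation of the maximal coupling error, followed by Gumbel asymptotics for the standardized non-stationary Gaussian sequence under a Berman-type decay condition supplied by Lemma~\ref{lemma1}. The only cosmetic difference is that you sketch the underlying normal comparison argument and Mills-ratio computation of the constants, whereas the paper imports these packaged as \cite{Deo1972} and \cite{Deo1973}.
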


The above result suggest the asymptotic simultaneous confidence intervals
\[
\vecv_j' \wh{\bfSigma}_n \vecw_j \pm \frac{\wh{\beta}_{jj} q_m}{ \sqrt{m} }, \qquad j = 0, \dots, m,
\]
where $ \wh{\beta}_{jj} $ is a consistent estimator of the asymptotic standard deviation $ \beta_{jj} $ and
\[
	q_m = a_m G^{-1}( 1-\alpha ) + b_m,
\]
where $ G^{-1}(x) = - \log( - \log( 1 - \alpha ) ) $, $ x \in (0,1) $, denotes the quantile function of the Gumbel distribution.
In order to estimate the asymptotic variances, one may use the Bartlett-type estimators studied in \cite{StelandSachsSPA}, \cite{StelandJMVA2020} and \cite{BoursSteland2021}.

\section{Applications}
\label{Sec:Applications}

When investing in financial markets with $d$ assets one needs to set up a portfolio $ \vecw \in \R^d $ whose time $t$ return equals $ \vecw' \vecY_t $, if $ \vecY_t $ is the vector of (daily, weekly or monthly) asset returns. Thus, the projections analyzed by the proposed approach correspond to portfolio returns, their variances represent the risk associated with the portfolio and the covariances measure the dependence between portfolios. Below we discuss two concrete applications, namely the construction of long-only portfolios minimizing the portfolio risk and index tracking to physically replicate an index.

Projections also arise naturally in convolutional neural networks and, indeed, in large numbers. These networks are a common tool in image analysis, object detection and classification to mention only a few key areas. Here projections $ \vecv' \vecY_t $ serve as first principle features calculated from the (raw) input data $ \vecY_t$,  which are fed into later stages of the neural net. Internally, the neural net computes derived features and combinations thereof (wich can mimic logic combinations), and finally linearly combines them to produce an output. 

Lastly, we consider data as arising in an idealized environmental application where a large number of sensors is used to receive a signal, but also in quality control where sensors are placed at different locations to measure vibrations and resonance of materials such as airplane wings. Due to different distances to the signal source the sensors receive a more or less damped signal.

\subsection{The Long-Only Minimum-Variance Portfolio in One Factor Models}

Suppose that $ \vecY_t $ are $d$ mean zero (excess) asset returns with positive definite covariance matrix $ \bfSigma $. Consider the global minimum-variance portfolio (MVP) $ \vecw^* \in \R^d $ defined by the minimization problem
\[ 
\min_{\vecw \in \R^d} \vecw' \bfSigma \vecw, \quad \text{such that} \quad \vecw'\eins = 1,
\]
The constraint is a budget constraint which allows for long as well as short position. It is well known that there is a closed-form formula for the solution, namely
\[
	\vecw^* = \frac{ \bfSigma^{-1} \eins }{\eins'\bfSigma^{-1} \eins}.
\]
Moreover, any rational investor in the Markowitz sense holds a linear combination of the market
If short sales are not allowed, we are interested in the long-only minimum-variance portfolio (LMVP) 
\[
\min \vecw' \bfSigma \vecw, \quad \text{such that\ \ } w_i \ge 0, 1 \le i \le d, \ \vecw'\eins = 1,
\]
which adds the long-only constraints. In general, there are no explicit solutions so that one has to rely on numerical algorithms. However, in the single factor model, a simple closed-form solution for the GMVP can be derived which carries over to a semi-closed form solution for the LMVP, see \cite{ClarkeSilvaThorley2011} and \cite{Qi2021}. The single factor model assumes that 
\[
Y_t^{(\nu)} = \beta_\nu (R_{Mt}-r) + \epsilon_t^{(\nu)}
\]
where $ R_{Mt} $ stands for the time $t$ return of the factor (e.g. the market represented in practice by an index such as the S\&P~500 or a capitalization-weighted average of the investable universe) with variance $ \sigma_M^2 $, $r$ is the risk-free rate of return, $ \beta_1, \ldots, \beta_d $ are the beta factors and $ \epsilon_t^{(\nu)} $ the mean zero idiosyncratic errors with variances $ \sigma_\nu^2 $, uncorrelated across $ \nu $. In the following it is assumed that all beta factors are nonnegative. Put $ \vecb = (\beta_1, \ldots, \beta_d)' $. Then 
\[ \bfSigma = \sigma_M^2 \vecb \vecb' + \matS, \qquad \matS = \text{diag}(\sigma_1^2, \ldots, \sigma_d^2) \]
and hence 
\[ \bfSigma^{-1} = \matS^{-1} - \frac{\vecb_r \vecb_r'}{\sigma_M^{-2} + \vecb_r' \vecb}, 
\] 
where $ \vecb_r = (\beta_1/\sigma_1^2, \ldots, \beta_d / \sigma_d^2)' $ is the vector of risk-adjusted beta's and $ \matS^{-1} = \text{diag}(\sigma_1^{-2}, \ldots, \sigma_d^{-2}) $. One gets the explicit closed-form optimal solution for the MVP weights
\begin{equation}
\label{MVP-Weights}
w_\nu^* = \frac{\sigma_{MVP}^2}{\sigma_\nu^2}\left(1- \frac{\min(\beta_i,\beta_{LS})}{\beta_{LS}} \right), \qquad 1 \le \nu \le d.
\end{equation}
Here $ \beta_{LS} = \frac{\sigma_{MVP}^{-2} + \sum_{i=1}^d \beta_i / \sigma_i^2 }{ \sum_{i=1}^d \beta_i / \sigma_i^2 }$ is the long-short threshold beta.  One buys only assets with beta's smaller than this threshold. Assets with beta factors exceeding $ \beta_{LS} $ are shorted. It has been conjectured in \cite{ClarkeSilvaThorley2011} and shown in \cite{Qi2021} that the long-only minimum-variance portfolio attains the same formula with $ \beta_{LS} $ replaced by the long-only threshold beta given by the smallest solution of the equation
\[ 
\beta_{LO} = \frac{\sigma_{MVP}^{-2} + \sum_{\beta_i < \beta_{LO}} \beta_i / \sigma_i^2 }{ \sum_{\beta_i < \beta_{LO}} \beta_i / \sigma_i^2 }.
\]
Therefore, the  LMVP weights are given
\[
	w_{L,\nu^*} = \frac{1}{\sigma_\nu^2} \left( 1 - \frac{\min(\beta_\nu,\beta_{LO})}{\beta_{LO}}  \right).
\]
Denote the resulting portfolio by $ \vecw_{LO}^* = (w_{LO,1}, \ldots, w_{LO,d})' $. 
The optimal portfolio has only (long) positions in all investable assets with beta factors not exceeding $\beta_{LO} $, i.e. in low-beta assets. Observe that due to the budget constraint one may ignore the factor $ \sigma_{MVP}^2 $ in the formula for $ w_\nu^* $: One can calculate the weights assuming $ \sigma_{MVP}^2 = 1 $ and then divide these values by their sum to ensure the budget constraint.  

Since the porfolio is long-only by construction, the budget constraint also ensures that 
\[ 
	\| \vecw^*_{LO} \|_1 = \sum_{\nu=1}^d |w_{LO,\nu} | = \sum_{\nu=1}^d w_{LO,\nu} = 1.
\] 
Consequently, the LMVP  has uniformly bounded $ \ell_1 $-norm.

The basic idea of the following approach to analyze the market in terms of the LMVP is to form subportfolios of assets with similar idiosyncratic risks. We may assume that the assets are ordered with respect to their idiosyncratic risks, 
\[
  \sigma_1^2 \le \cdots \le \sigma_d^2.
\]
Group now the assets in $ g $ groups of  assets, such that the $k$th group consists of assets $ \calG_k = \{ i_k, \ldots, i_{k+1}-1 \} $ with idiosyncratic risks $ \sigma_{i_k}^2, \ldots, \sigma_{i_{k+1}-1}^2 $. This means, the assets are grouped according to their idiosyncratic risk in $g$ risk classes. Define the weighting vector $ \vecv_i $ by $ \vecv^{(i)}_{\nu} =   \vecw^*_{LO,\nu} $ if $ \nu \in \calG_i  $ and $ \vecv^{(i)}_{\nu}  = 0 $ if $ \nu \not\in \calG_i $, $ 1 \le i \le g $. Observe that $ \vecv_i $ corresponds to the projection onto the long-only sub-portfolio corresponding to the assets of the risk class $ \calG_i $ and $ \vecv_1 + \ldots + \vecv_g = \vecw_{LO}^* $. In other words, the subportfolios $ \vecv_i $ decompose the long-only optimal portfolio with respect to their idiosyncratic risks.

Let us assume that these idiosyncratic risks of risk class $i$  satisfy the growth condition
\begin{equation}
\label{OrderRestriction}
  \sigma_\nu^2 \ge C \nu^{1+\delta} \rho^{-i}, \qquad \nu \in \calG_i , 
\end{equation}
for some $ \delta > 0 $, where $ 0 < \rho < 1 $ is from the decay condition (\ref{DecayCond}) for the coefficients of the underlying linear processes. Let us briefly argue that the above condition can hold true. For that purpose assume equality in the condition. If we denote $ \nu_{\min}^{(i)} = \min \calG_i $ and $ \nu_{\max}^{(i)} = \max \calG_i $, then the order relation of variances holds, if \[ ( \nu_{\max}^{(i)} )^{1+\delta} \rho^{-i} < (\nu_{\min}^{(i+1)} )^{1+\delta} \rho^{-(i+1)} \] or equivalently 
\[
  \left( \frac{\nu_{\max}^{(i)} }{ \nu_{\min}^{(i+1)} } \right)^{1+\delta} < \frac{1}{\rho}.
\]
for all $i$. Since $ \nu_{\max}^{(i)} /\nu_{\min}^{(i+1)} = \nu_{\max}^{(i)} /(\nu_{\max}^{(i)} +1) < 1 $ and $ 1/\rho > 1 $, these inequalities are fulfilled for small enough $ \delta > 0 $.

Under assumption (\ref{OrderRestriction}), the (L)MVP weights corresponding to group $ \calG_i $, i.e. the non-vanishing entries $ v_{\nu}^{(i)} $, $ \nu \in \calG_i $, of $ \vecv_i $, satisfy
\[
  | v_{\nu}^{(i)} | =   w_{LO,\nu}^*  \le C \sigma_{MVP}^2 \nu^{-(1+\delta)} \rho^{i}.
\]
Therefore, we obtain
\[
  \left| \sum_{\nu=1}^d v^{(i)}_\nu c_j^{(\nu)} \right| = O\left( \sum_{\nu=1}^d \nu^{-1-\delta} \rho^{i+j} \right) = O( \rho^{i+j} ),
  \]
for all $ 1 \le i \le g $. It follows that the (L)MVP subportfolios weighting vectors satisfy Assumption (W2), so that the established results allow to compare these portfolios with all portfolio vectors which fulfill (W2) as well.

\subsection{Lasso-Regression for Index Tracking}

Financial indices aim at representing markets and therefore may consist of a large number of investable assets. For example, the MSCI All Countries World Index (MSCI ACWI) captures large, mid and small cap investable stocks of 23 developed markets and 24 emerging markets countries. It consists of 9,123 constituents and covers approximately 99\% of the global equity investment opportunity set. There are many exchange traded funds (ETFs) which physically replicate the index, in order to allow investors to invest into the world market measured by the MSCI AWCI. Clearly, holding the right positions in all 9,123 stocks and update them on a regular basis would result in high trading costs and could be even challenging or impossible, since not all companies included in the index are liquid. Therefore, issuers of such ETFs try to track the underlying index as closely as possible by investing only in a subset of the universe underlying the index. Thus, the problem is to replicate as closely as possible the index price resp. return series by some $s$-sparse portfolio vector. This problem is called index tracking, and the tracking error is an important quantity published by the issuers. 

Here, one may use lasso regressions to track the index, say $Z_t$, see \cite{YangWu2016}. The corresponding regression model is  $ Z_t = \bfbeta^\top \vecY_t + e_t $,  where
	$ \vecY_t $ serve as a $d$-dimensional regressor (consisting of the underlying investable universe) and $ e_t $ are mean zero error terms (representing the tracking error). $ \bfbeta$ is assumed to be an $ s $-sparse portfolio vector. The lasso approach yields an 
	 $ \ell_0 $-sparse estimator $ \wh{\bfbeta}_n $, where the number, say $s$, of non-vanishing entries of the estimator can be controlled by the choice of the penalty parameter to balance the number of active positions and the tracking error.
 The lasso provides us with a $s$-sparse portfolio $ \wh{\bfbeta}_n $ with $s$ active positions such that $ \wh{\bfbeta}_n^\top \vecY_t $ approximates the index  $ Z_t $. Decomposing $ \wh{\beta}_n $ as in the previous application into $g$ groups according to the ordering of the absolute values, $ | \wh{\beta}_{n,\nu} | $, one may show that Assumption (W2) holds under similar conditions.

\subsection{Convolutional Neural Networks in Image Analysis}

For a convolutional neural network designed to classify image data the question arises whether the network classifier is still valid and applicable when the data distribution changes. The statistic $ M_n $ can be used to analyze this question by applying it to the convolutional projections computed by the net at the first stage.
 Suppose that  $ \matV_i $, $ 1 \le i \le m $, are $m$ convolution matrix operators of a  convolutional layer of a pretrained deep neural network, \cite{LecunEtAl1998}. It is common practice to train the network in such a way that all convolutional filters $ \matV_i $, $ 1 \le i \le m $, are  $s$-sparse for some integer $ s $. For simplicity of presentation, we assume that the convolutional layer is the first hidden layer and thus  processes an input image $ \matX $ represented by, say, a $ n_x \times n_y $, matrix with entries  $ X_{k\ell} $. The convolutional layer  computes the inner products $ Z_i = \sum_{k=1}^{n_x} \sum_{\ell=1}^{n_y} V_{i,k \ell} X_{k\ell} $, $ 1 \le i \le m $, which can be written in the form $ Z_i = \vecv_i' \vecY $, if $ \vecY = \text{vech}(\matX) $ and $ \vecv_i = \text{vech}( \matV_i ) $, where $ \text{vech} $ denotes the vectorization operator stacking all columns (or rows) of a matrix. We see that the projection vectors $ \vecv_i $ arising in a convolutional layer of a deep learner are $s$-sparse. If we assume that these vectors defining the convolutional layer can be order in such a way that 
 \[
 | v_{\nu}^{(i)} | \le  C \nu^{-(1+\delta)} \rho^{i}.
 \]
 holds for some constant $ C $ and some $ \rho \in (0,1) $, then Assumption (W2) follows as in the first application. Indeed, for many problems the task is essentially to identify objects and related details in the center of the images. So the corresponding convolutional filters will concentrated on the respective rows. Thus, we can order the rows according to a measure of the filter coefficients such as their $ \ell_1 $-norm, so that filters identifying details should get smaller measures than more global filters. Hence, after appropriately permuting it is not restrictive to assume that the vectors $ \vecv_i $ obtained by stacking rows satisfy the above condition.

If the vectorized image $ \vecY $ corresponds to a produced item under normal condition, we may assume that its covariance matrix is given by the in-control covariance matrix under normal conditions. This matrix can  be assumed to be known, as it can be estimated from defect-free produced items. But if the distribution of $ \vecY $ changes, that change can be detected by the statistic $ M_n $ which examines the maximal deviation of the projected sample covariance from its in-control projection, where the projections are the (trained) features used by the neural network. This means, we analyze a possible departure from the in-control state in terms of the convolutional layer of the given neural network. If a departure is detected, this indicates a  change of the underlying distribution. Since this could result in suboptimal convolutional filters, one may trigger a retraining of the neural network. 

\subsection{Array of Sensors Model}

Suppose that $ \vecY_t $ corresponds to measurements from $d$ sensors, so that $ Y_t^{(\nu)} $ represents the reading from sensor $ \nu $. Let us assume that the sensors receive a signal from a fixed location and that the sensors are located such that the signal strength decreases from sensor $ \nu $ to sensor $ \nu+1 $ by a deterministic damping factor  $D_{\nu+1}$,
\[
Y_t^{(\nu+1)} = D_{\nu+1} Y_t^{(\nu)}, \qquad \nu \ge 1.
\]
For simplicity, let us assume that the $i$th sensors is located on a circle around the signal source with radius $ r_i$ where $ r_1 < \cdots < r_d $. Let us call sensor $ i+ \ell$ the $ \ell$th neighbor of sensor $i$. Assume that sensor $ \nu = 1 $ records the filtered series, $ Y_t^{(1)} = \sum_{j=1}^\infty c_j^{(1)} \epsilon_{t-j} $, for some strong white noise input process $ \{ \epsilon_t \} $, according to our model, with coefficients $ c_j^{(1)} \le  C \rho^j $, $ j \ge 0 $, for some $ 0 \le \rho < 1 $ and a constant $ C$. If we assume that the damping factors satisfies 
\[ 
D_\nu \le \rho, \qquad \text{for all $ 2 \le \nu \le d $},
\] 
then the coefficients satisfy
\[ 
c_j^{(\nu)} = D _\nu c_j^{(\nu-1)} = \cdots = \prod_{k=2}^\nu D_k  c_j^{(1)} \le C  \rho^{\nu-1+j} = K \rho^{\nu+j} = O( \rho^{\nu+j}), \quad j \ge 0,
\] 
with $ K = C/\rho $. Thus, Assumption (W2) is satisfied if $ \vecv_i = \vecw_i = \vece_i $ corresponding to an analysis of all variances. Similarly, for the choice $ \vecv_i = \vece_i $ and $ \vecw_i = \vece_{i+\ell} $ for some fixed $ \ell \in \N $, which corresponds to an analysis of the covariances of all sensors with respect to their $\ell$th neigbor, 
since $ \sum_{\nu=1}^d w_\nu^{(i)} c_j^{(\nu)} = c_j^{(i+\ell)} = O( \rho^{i+\ell+j}) = O( \rho^{i+j}) $. 

\section{Proofs}

\begin{proof}[Proof of Lemma~\ref{lemma1}] Let us first assume that Assumption (W1) holds. 
	We provide explict derivations and start with $ F_{ni} $.  We have
	\[
	\left( \sum_{\nu=1}^d | v_\nu^{(i)} | | c_j^{(\nu)} |  \right) \left( \sum_{\mu=1}^d | w_\mu^{(i)} | | c_j^{(\mu)} | \right)   = O( \rho^{2ji} ), \qquad j \ge 0, i \ge 1,
	\]
	and therefore, after rearranging terms, 
	\begin{align*}
		| F_{ni} | & \le \sum_{j = 1}^\infty \left( \sum_{\nu=1}^d | v_\nu^{(i)} | | c_j^{(\nu)} |  \right) \left( \sum_{\mu=1}^d | w_\mu^{(i)} | | c_j^{(\mu)} | \right)  \\
		&\le C  \rho^i \sum_{j=1}^\infty \rho^{2j} \\
		& = \frac{C}{1-\rho^2} \rho^i, 
	\end{align*}
	for some constant $ C $. Using the bound $ \sup_{i,k \ge 1} \rho^{2i} = \rho^2 \le 1 $ we obtain
	for $ k \ge i + h $, so that $  \rho^{i+k} \le \rho^{2i+h} \le \rho^h$ for all such pairs $i,k $,
	\[
	\sup_{|i-k|\ge h} | F_{ni} F_{nk} | = O( \rho^{k+i} ) = O( \rho^h ).
	\]
	The case $ i \ge k +h $ is treated analogously.	
	
	To estimate the term $ F_{n,ik} $ multiply out the expression $  c_{jl}( \nu, \mu, \nu', \mu' )  $ and rearrange expressions to represent $ F_{n,ik} $ as a sum of four terms, 
	\[ F_{n,ik} = F_{n,ik}^{(1)} + \cdots + F_{n,ik}^{(4)}, \] 
	which can be estimated separately. The first term corresponding to $ c_j^{(\nu)} c_{j+l}^{(\mu)} c_j^{(\nu')} c_{j+l}^{(\mu')}  $ is given by
	\[
	F_{n,ik}^{(1)} = \sum_{\ell=1}^\infty \sum_{j=0}^\infty
	\left( \sum_{\nu=1}^d v_{\nu}^{(i)} c_j^{(\nu)} \right)
	\left( \sum_{\mu=1}^d w_\mu^{(i)} c_{j+\ell}^{(\mu)} \right)
	\left( \sum_{\nu'=1}^d v_{\nu'}^{(k)} c_{j}^{(\nu')} \right)
	\left( \sum_{\mu'=1}^d w_{\mu'}^{(k)} c_{j+\ell}^{(\mu')} \right).
	\]
	It can be bounded by 
	\begin{align*}
		| F_{n,ik}^{(1)} | &\le C_1 \sum_{\ell=1}^\infty \sum_{j=1}^\infty \rho^{2ij+2kj +\ell(i+k)}
	\end{align*}
	for some constant $ C_1 $. Since $ \rho^{(i+k)\ell} \le \rho^\ell $, $ \rho^{(i+k)j} \le \rho^j $ and $ \rho^{(i+k)j} \le \rho^{i+k} $, we have
	\[
	\rho^{2ij+2kj +\ell(i+k)} = \rho^{(i+k)\ell} \rho^{(i+k)j} \rho^{(i+k)j} 
	\le \rho^\ell \rho^{i+k} \rho^j.
	\]
	For all $i,k \ge 1$ with $ k \ge i + h $ it holds $ \rho^{i+k} \le \rho^{2i} \rho^{h} \le \rho^h $, and if $ i \ge k + h$, then $ \rho^{i+k} \le \rho^{2k} \rho^h \le \rho^h $. Hence, for all $i,k $ with $ |i-k| \ge h$ we obtain
	\[
	| F_{n,ik}^{(1)} | \le  C_1 \sum_{\ell=1}^\infty \rho^\ell \sum_{j=1}^\infty \rho^j \rho^{h} = O( \rho^h ).
	\]
	
	Next consider the second term, $ F_{n,ik}^{(2)} $, corresponding to 
	$ c_j^{(\nu)} c_{j+\ell}^{(\mu)} c_j^{(\mu')} c_{j+\ell}^{(\nu')} $, i.e.
	\[
	F_{n,ik}^{(2)} = \sum_{\ell=1}^\infty \sum_{j=0}^\infty
	\left( \sum_{\nu=1}^d v_{\nu}^{(i)} c_j^{(\nu)} \right)
	\left( \sum_{\mu=1}^d w_\mu^{(i)} c_{j+\ell}^{(\mu)} \right)
	\left( \sum_{\nu'=1}^d v_{\nu'}^{(k)} c_{j+\ell}^{(\nu')} \right)
	\left( \sum_{\mu'=1}^d w_{\mu'}^{(k)} c_{j}^{(\mu')}  \right).
	\]
	It can be bounded by 
	\[
	| F_{n,ik}^{(2)} | = O( \rho^{ji+(j+\ell)i+(j+\ell)k+jk} ) = O( \rho^{\ell(i+k)} \rho^{2j(i+k)})
	\]
	Hence, using the same arguments as above, $ | F_{n,ik}^{(2)} |  = O( \rho^h ) $ follows for all $i,k $ with $ |i-k| \ge h $.
	
	Now consider the term, $ F_{n,ik}^{(3)} $, corresponding to $ c_j^{(\mu)} c_{j+\ell}^{(\nu)} c_j^{(\nu')} c_{j+\ell}^{(\mu')} $, i.e.
	\[
	F_{n,ik}^{(3)} = \sum_{\ell=1}^\infty \sum_{j=0}^\infty
	\left( \sum_{\nu=1}^d v_{\nu}^{(i)} c_j^{(\mu)}\right)
	\left( \sum_{\mu=1}^d w_\mu^{(i)} c_{j+\ell}^{(\nu)}\right)
	\left( \sum_{\nu'=1}^d v_{\nu'}^{(k)} c_j^{(\nu')} \right)
	\left( \sum_{\mu'=1}^d w_{\mu'}^{(k)} c_{j+\ell}^{(\mu')}  \right).
	\] 
	which can be bounded by $ O( \rho^{ji+jk+(j+\ell)(i+k)}) = O( \rho^{(i+k)\ell} \rho^{2j(i+k)} )$ already treated above.
	Lastly, consider $ F_{n,ik}^{(4)} $ corresponding to $ c_j^{(\mu)} c_{j+\ell}^{(\nu)} c_j^{(\mu')} c_{j+\ell}^{(\nu')} $, i.e.
	\[
	F_{n,ik}^{(4)} = \sum_{\ell=1}^\infty \sum_{j=0}^\infty
	\left( \sum_{\nu=1}^d v_{\nu}^{(i)} c_j^{(\mu)}\right)
	\left( \sum_{\mu=1}^d w_\mu^{(i)} c_{j+\ell}^{(\nu)}\right)
	\left( \sum_{\nu'=1}^d v_{\nu'}^{(k)}  c_{j+\ell}^{(\nu')}   \right)
	\left( \sum_{\mu'=1}^d w_{\mu'}^{(k)} c_j^{(\mu')} \right).
	\]
	This term can be bounded by $ O( \rho^{ij+(j+\ell)(i+k)+jk}) = O( \rho^{(i+k)\ell} \rho^{2j(i+k)} )$ as well. 
	
	Now let us assume Assumption (W2). Then
	\[
	\left( \sum_{\nu=1}^d | v_\nu^{(i)} | | c_j^{(\nu)} |  \right) \left( \sum_{\mu=1}^d | w_\mu^{(i)} | | c_j^{(\mu)} | \right)   = O( \rho^{2(j+i)} ), \qquad j \ge 0, i \ge 1,
	\]
	and therefore, after rearranging terms, 
	\begin{align*}
		| F_{ni} | & \le \sum_{j = 1}^\infty \left( \sum_{\nu=1}^d | v_\nu^{(i)} | | c_j^{(\nu)} |  \right) \left( \sum_{\mu=1}^d | w_\mu^{(i)} | | c_j^{(\mu)} | \right)  \\
		&\le C  \rho^{2i} \sum_{j=1}^\infty \rho^{2(j+i)} \\
		& = \frac{C}{1-\rho^2} \rho^i, 
	\end{align*}
	Therefore, $ | F_{ni} | | F_{nk} | = O( \rho^{i+k} ) = O( \rho^h ) $, if $ |k-i|\ge h $. Next consider $ F_{n,ik}^{(1)} $. 
	\begin{align*}
		F_{n,ik}^{(1)} & = \sum_{\ell=1}^\infty \sum_{j=0}^\infty
		\left( \sum_{\nu=1}^d v_{\nu}^{(i)} c_j^{(\nu)} \right)
		\left( \sum_{\mu=1}^d w_\mu^{(i)} c_{j+\ell}^{(\mu)} \right)
		\left( \sum_{\nu'=1}^d v_{\nu'}^{(k)} c_{j}^{(\nu')} \right)
		\left( \sum_{\mu'=1}^d w_{\mu'}^{(k)} c_{j+\ell}^{(\mu')} \right) \\
		& = O \left( \sum_{\ell=1}^\infty \sum_{j=0}^\infty \rho^{i+j+i+j+\ell+k+j+k+j+\ell} \right) \\
		& = O\left( \rho^{2(i+k)} \sum_{\ell=1}^\infty \rho^{2\ell} \sum_{j=1}^\infty \rho^{4j}   
		\right) \\
		& = O( \rho^{2(i+k)}).
	\end{align*}
	Hence, as above, $ 	F_{n,ik}^{(1)}  = O( \rho^h ) $ if $ |i-k| \ge h $. The remaining terms can be bounded similarly. Indeed, examining $ F_{n,ik}^{(2)} $ under Assumption (W2) gives
	\[
	| F_{n,ik}^{(2)} | = O\left( \sum_{\ell=1}^\infty \sum_{j=0}^\infty \rho^{i+j+i+j+\ell+k+j+\ell+k+j} \right) = O( \rho^h ),
	\]
	and
	\[
	| F_{n,ik}^{(3)} | = O\left( \sum_{\ell=1}^\infty \sum_{j=0}^\infty\rho^{i+j+i+j+\ell+k+j+\ell+k+j} \right) = O( \rho^h ).
	\]
	Lastly,
	\[
	| F_{n,ik}^{(4)} | = O\left( \sum_{\ell=1}^\infty \sum_{j=0}^\infty\rho^{i+j+i+j+\ell+k+j+\ell+k+j} \right) = O( \rho^h ).
	\]
	This completes the proof.
\end{proof}

\begin{proof}[Proof of Lemma~\ref{lemma2}]  Denote by $ \hat{F}_{ni} $ and  $ \hat{F}_{n,ik} $ the quantities $ F_{ni} $ and $ F_{n,ik} $ when replacing $ \vecv_i $ by $ \vecv_{ni} $ and $ \vecw_{i} $ by $ \vecw_{ni} $. Denote by $  \hat{F}_{n,ik} = \sum_{l=1}^4 \hat{F}_{n,ik}^{(l)} $ the corresponding decomposition as in the previous proof. By assumption
	$ | v_{n\nu}^{(i)} - v_\nu^{(i)} | = O( r_n | v_\nu^{(i)} | ) $, where the $O$ does not depend on $ \nu $ and $ i $. Write
	\[
	v_{n\nu}^{(i)} w_{n\mu}^{(k)} - v_\nu^{(i)} w_\mu^{(k)} =|v_{n\nu}^{(i)} - v_\nu^{(i)}] [w_{\mu}^{(k)} + \frac{w_{n\mu}^{(k)} - w_\mu^{(k)}}{ w_\mu^{(k)} } w_{\mu}^{(k)} ] + v_\nu^{(i)} [ w_{n \mu}^{(k)} - w_\mu^{(k)}]
	\]
	so that
	\begin{align*}
		| v_{n\nu}^{(i)} w_{n\mu}^{(k)} - v_\nu^{(i)} w_\mu^{(k)} | &= O( r_n | v_\nu^{(i·)} w_\mu^{(k)} | + r_n^2 | v_\nu^{(i)} w_\mu^{(k)} |) + O( r_n | v_\nu^{(i)} | r_n |w_\mu^{(k)}| ) \\
		&= O( r_n |v_\nu^{(i)} w_\mu^{(k)} | ),
	\end{align*}
	for all $i,k \ge 1 $.
	Iterating this argument with $ | v_{n\nu}^{(i)} w_{n\mu}^{(i)}  - v_{\nu}^{(i)} w_{\mu}^{(i)}  | = O( r_n | v_{\nu}^{(i)} w_{\mu}^{(i)} | ) $ and
	$ |v_{n\nu'}^{(k)} w_{n\mu'}^{(k)} - v_{n\nu'}^{(k)} w_{n\mu'}^{(k)}| = O( r_n |v_{\nu'}^{(k)} w_{\mu'}^{(k)} | )$  yields
	\begin{equation}
		\label{iterated}
		| (v_{n\nu}^{(i)} w_{n\mu}^{(i)} ) (v_{n\nu'}^{(k)} w_{n\mu'}^{(k)})
		- (v_{\nu}^{(i)} w_{\mu}^{(i)} ) (v_{\nu'}^{(k)} w_{\mu'}^{(k)}) |
		= O( r_n | v_{\nu}^{(i)} w_{\mu}^{(i)} v_{\nu'}^{(k)} w_{\mu'}^{(k)} | ).
	\end{equation}
	This implies
	\[
	|\hat F_{ni} - F_{ni}| = O\left(  r_n \sum_{\nu, \mu=1}^{d_n} | w_\mu^{(k)} v_\nu^{(i)} | \left| \sum_{j=1}^\infty c_j^{(\nu)} c_j^{(\mu)} \right| \right) = O(r_n)
	\]
	and in view of (\ref{iterated}) we also get
	\[
	| \hat{F}_{n,ik}^{(l)} - F_{n,ik}^{(l)} | = O( r_n ), \qquad l = 1, \ldots, 4.
	\]
	The assertion now follows from the formulas for $ \beta_{ik}^{(n)} $ and $ \beta_{ik}^{(n)}$, since $ \hat\beta_{ik}^{(n)} = \beta_{ik}^{(n)}  + O(r_n) $. For $ i, k $ with $ |i-k| \ge h $ we have $ \hat\beta_{ik}^{(n)}  = O( \rho^h ) + o(1) $, where the $o(1) $ term is bounded by $ O( \rho^h ) $ if $n$ is large enough. Thus, $  \hat\beta_{ik}^{(n)}  = O( \rho^h ) $, which proves the second assertion.  
\end{proof}

\begin{proof}[Proof of Theorem~\ref{EVT}] We use the results of  \cite{Deo1972} and \cite{Deo1973} on Berman's condition for maxima of (absolute values of) non-stationary Gaussian processes: Let $ \{ \xi_t : t \in \N \} $ be a Gaussian process with $ E( \xi_t ) = 0 $ and $ E( \xi_t^2 ) = 1 $ for all $ t \in \N $. Assume that
	\[
	\rho_n = \sup_{|i-j| \ge n} | \Cov( \xi_i, \xi_j ) |
	\]
	satisfies
	\begin{itemize}
		\item[(I)] $\quad  \sum_{n=1}^\infty \rho_n^2 < \infty $, or
		\item[(II)] $ \quad \exists \beta > 0: \rho_n ( \log n )^{2+\beta} \to 0 $, as $ n \to \infty $ or
	\end{itemize}
	Then, by \cite[Lemma~1]{Deo1972}, for each $ z \in \R $ it holds
	\[
	P\biggl( a_n^{-1}\bigl( \max_{1 \le k \le n} | \xi_k | - b_n \bigr) \le z \biggr) \to \exp( - e^{-z} ),
	\]
	as $ n \to \infty $, where $ a_n = (2 \log n)^{-1/2} $ and $ b_n = (2 \log n)^{1/2} - (8 \log n)^{-1/2}( \log \log n + 4 \pi - 4) $. Further, see \cite[Theorem~2]{Deo1973}, if
	\begin{itemize}
		\item[(III)] $ \quad \rho_n = o(1) $, and
		\item[(IV)] $ \quad \sum_{1 \le i < j \le n} | \Cov( \xi_i, \xi_j ) | = O( n^{2-\gamma} ) $ for some $ \gamma > 0 $,
	\end{itemize}
	then
	\[
	P\biggl( a_n^{-1}\bigl( \max_{1 \le k \le n}  \xi_k  - c_n \bigr) \le z \biggr) \to \exp( - e^{-z} ),
	\]
	with 
	$ 	c_n =   (2 \log n )^{-1/2}- \frac{1}{2}(2 \log n)^{-1/2}( \log \log n + 4 \pi).
	$
	
	Let $ \calB = ( \calB_j )_j $ denote the Brownian motion with mean zero and covariances
	$ ( \beta_{ik} )_{i,k} $ and put $ \calB^* = ( \calB_j^* )_j = ( \calB_j / \beta_{jj}^{1/2} )_j $. By Lemma~\ref{lemma1} the above condition is satisfied, so that we can conclude that 
	\[
	P\left( a_n^{-1}\biggl( \max_{1 \le j \le n} | \calB_j^* | - b_n \biggr) \le z \right) \to \exp( -e^{-z} ), 
	\]
	as $ n \to \infty $, for all $ z \in \R $. 
	
	Put  $ A_m(z)= a_m z + b_m $ and
	$ \calD_n^* = ( \calD_{nj}^* )_j $ with $ \calD_{nj}^* = \calD_{nj} / \beta_{jj}^{1/2} $. From the results of \cite{StelandSachsSPA}, which considers a specific high-dimensional linear time series model,  \cite{BoursSteland2021}, where the case of a general multivariate linear process is treated, and \cite{MiesSteland2022}, which implies that almost the rate of approximation $ o_P( n^{-1/6} ) $ can be achieved under appropriate (moment) assumptions and treats nonlinear, non-stationary time series, there exists some $ 0 < \lambda < 1/6 $ such that, on a new probability space, 
	\[
	\max_{j \le m} | \calB^*_j - \calD_{nj}^* | = o_P( n^{-\lambda} ),
	\]
	a.s.  Then for $ z \ge 0 $
	\begin{align*}
		p_n(z) &= P\left( a_m^{-1} \biggl( \max_{1 \le j \le m} | \calD_{nj} |/ \beta_{jj}^{1/2} - b_m \biggr) \le z \right) \\
		& = P\left( \max_{1 \le j \le m} | \calD_{nj}^* | \le A_m(z) \right) \\
		&  = P \left( \max_{1 \le j \le m} | \calB_j^* | \le A_m(z) 
		+ \max_{1 \le j \le m} |  \calB_j^* | - \max_{1 \le j \le L_n} | \calD_{nj}^* | \right).
	\end{align*}
	A lower bound is given by
	\begin{align*}
		p_n(z) & \ge P\left( \max_{1 \le j \le m} | \calB_j^* | \le A_m(z) - \max_{1 \le j \le m} | \calB_j^* - \calD_{nj}^* | \right) \\
		& = P\left( a_m^{-1} \biggl( \max_{1 \le j \le m} |  \calB_j^* | - b_m \biggr) 
		\le z - a_m^{-1} \max_{1 \le j \le m} | \calB_j^* - \calD_{nj}^* | \right),
	\end{align*}
	and an upper bound by
	\begin{align*}
		p_n(z) & \le P\left( \max_{1 \le j \le m} | \calB_j^* | \le A_m(z) + \max_{1 \le j \le m} | \calB_j^* - \calD_{nj}^* | \right) \\
		& = P\left( a_m^{-1} \biggl( \max_{1 \le j \le m} |  \calB_j^* | - b_m \biggr) 
		\le z + a_m^{-1} \max_{1 \le j \le m} | \calB_j^* - \calD_{nj}^* | \right).
	\end{align*}
	We have
	\begin{align*}
		0 & \le a_m^{-1} \max_{1 \le j \le m} | \calB_j^* - \calD_{nj}^* |
		= o_P( m  \sqrt{2 \log m } n^{-\lambda} ) = o_P(1),
	\end{align*}
	a.s. Recall that  if a  sequence $ \{ G, G_n \} $ of distribution functions on the real line with $ G $ continuous satisfies $ G_n(z) \to G(z) $ for all $z$, then Polya's theorem implies the uniform convergence $ \sup_z | G_n(z) - G(z) | \to 0 $. Therefore, we obtain for $ z \ge 0 $
	\[ p_n(z) \to \lim_{n \to \infty} P\left( a_m^{-1}\left( \max_{1 \le j \le m} | \calB_j | - b_n \right) \le z \right) = \exp( -e^{-z} ).
	\]
	This completes the proof.
\end{proof}

\end{document}